\newtheorem{thm}{Theorem}[section]
\newtheorem{cor}[thm]{Corollary}
\newtheorem{prop}[thm]{Proposition}
\theoremstyle{definition}
\newtheorem{ex}[thm]{Example}
\numberwithin{equation}{section}
\newcommand{\paragraf}{\textsection}
\newcommand{\R}{\ensuremath{\mathbb R}}    
\newcommand{\C}{\ensuremath{\mathbb C}}    
\newcommand{\N}{\ensuremath{\mathbb N}}    
\newcommand{\gperp}{{[\perp]}}
\newcommand{\iso}{\circ}
\newcommand{\product}{[\cdot\,,\cdot]}
\newcommand{\hproduct}{(\cdot\,,\cdot)}
\newcommand{\calD}{\mathcal D}
\newcommand{\calH}{\mathcal H}
\newcommand{\calK}{\mathcal K}         
\newcommand{\calL}{\mathcal L}         
\newcommand{\calM}{\mathcal M}         
\newcommand{\calN}{\mathcal N}         
         \newcommand{\frakP}{\mathfrak P}
\newcommand{\la}{\lambda}
\newcommand{\veps}{\varepsilon}
\newcommand{\bmat}{\begin{pmatrix}}
\newcommand{\emat}{\end{pmatrix}}
\newcommand{\mat}[4]
{
   \begin{pmatrix}
      #1 & #2\\
      #3 & #4
   \end{pmatrix}
}
\newcommand{\smallvek}[2]{\left(\begin{smallmatrix}#1\\#2\end{smallmatrix}\right)}
\newcommand{\dom}{\operatorname{dom}}
\newcommand{\ol}{\overline}
\newcommand{\ds}{\dotplus}
\newcommand{\eig}{\operatorname{eig}}
\newcommand{\sig}{\operatorname{sig}}
\begin{document}
\vspace*{-.3cm}
\begin{center}
\begin{spacing}{1.7}
{\LARGE\bf Eigenvalues in gaps of selfadjoint operators in Pontryagin spaces}
\end{spacing}

\vspace{1cm}
{\Large Friedrich Philipp}

\vspace{.3cm}
Technische Universit\"at Ilmenau, Postfach 10 05 65, 98684 Ilmenau, Germany, fmphilipp@gmail.com
\end{center}

\vspace{.5cm}
\begin{abstract}
\noindent Given an open real interval $\Delta$ and two selfadjoint operators $A_1$, $A_2$ in a $\Pi_\kappa$-space with $n$-dimensional resolvent difference we show that the difference of the total multiplicities of the eigenvalues of $A_1$ and $A_2$ in $\Delta$ is at most $n+2\kappa$.
\end{abstract}

\vspace{.6cm}
{\it Keywords:} Pontryagin space, selfadjoint operator, eigenvalue, gap

{\it MSC 2010:} 46C20, 47B50, 47A11, 47A55

\section{Introduction and main result}
For selfadjoint operators $A_1$ and $A_2$ in a Hilbert space with $n$-dimensional resolvent difference (that is
$$
\dim\left((A_1 - \la)^{-1} - (A_2 - \la)^{-1}\right) = n
$$
holds for some (and hence for all) $\la\in\rho(A_1)\cap\rho(A_2)$) it is well-known that for each open interval $\Delta\subset\R\setminus\sigma_{\rm ess}(A_1)$ we have
\begin{equation}\label{e:hs}
\big|\eig(A_1,\Delta) \,-\, \eig(A_2,\Delta)\,\big|\,\le\, n,
\end{equation}
where $\eig(A_j,\Delta)$ denotes the number of eigenvalues of $A_j$ in $\Delta$ (counting multiplicities), $j=1,2$. In this note it is our main objective to generalize this theorem to the situation where $A_1$ and $A_2$ are selfadjoint operators in a Pontryagin space (for a detailed study of Pontryagin spaces and operators therein we refer to the monographs \cite{ai,b,ikl}). Since in the proof for the Hilbert space case (see, e.g., \cite[\paragraf 9.3, Theorem 3]{bs}) it is essential that the underlying inner product is positive definite, it cannot be expected that the estimate \eqref{e:hs} holds in the Pontryagin space situation. And indeed, the following simple example shows that \eqref{e:hs} is not even true in a two-dimensional $\Pi_1$-space.

\begin{ex}
In the space $\C^2$ we define the matrices
$$
J := \mat{1}{0}{0}{-1},\quad A_1 := \mat{1}{i}{i}{-1}\quad\text{and }\,A_2 := \mat{1/2}{0}{0}{1}
$$
and the inner product $[x,y] := (Jx,y)$, $x,y\in\C^2$, where $\hproduct$ denotes the standard scalar product in $\C^2$. Both matrices $A_1$ and $A_2$ are obviously selfadjoint in $(\C^2,\product)$. Moreover, $A_1\smallvek{2}{i} = A_2\smallvek{2}{i} = \smallvek{1}{i}$. But $\sigma(A_1) = \{0\}$ while $\sigma(A_2) = \{1/2,1\}$ and hence $\eig(A_1,(1/4,2)) = 0$ while $\eig(A_2,(1/4,2)) = 2$.
\end{ex}

If $(\frakP,\product)$ is a Pontryagin space and $\calM\subset\frakP$ is a closed subspace, then we write $\sig(\calM) := \kappa_+(\calM) - \kappa_-(\calM)$, where $\kappa_+(\calM)$ ($\kappa_-(\calM)$) denotes the number of positive (negative) squares of the inner product $\product$ on $\calM$. With this definition our main result reads as follows.

\begin{thm}\label{t:main}
Let $A_1$ and $A_2$ be selfadjoint operators in a Pontryagin space $(\frakP,\product)$ with $n$-dimensional resolvent difference. Then for every open {\rm (}bounded or unbounded{\rm )} interval $\Delta\subset\R\setminus\sigma_{\rm ess}(A_1)$ we have
\begin{equation}\label{e:sig_diff}
|\sig(\calL_\Delta(A_2)) - \sig(\calL_\Delta(A_1))|\,\le\,n,
\end{equation}
where $\calL_\Delta(A_j)$ denotes the closed linear span of the root subspaces of $A_j$ corresponding to the eigenvalues of $A_j$ in $\Delta$, $j=1,2$. In particular, if $\kappa$ denotes the number of negative squares of the inner product $\product$ on $\frakP$, then
\begin{equation}\label{e:eig_diff}
|\eig(A_1,\Delta) - \eig(A_2,\Delta)|\,\le\,n + 2\kappa.
\end{equation}
\end{thm}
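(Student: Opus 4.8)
The plan is to reduce the Pontryagin space statement to the Hilbert space estimate \eqref{e:hs} by an appropriate finite-rank modification that makes the inner product positive definite, while controlling how much the relevant eigenvalue counts can change. The key observation is that the second inequality \eqref{e:eig_diff} follows from the first one \eqref{e:sig_diff} together with the elementary fact that for any root subspace decomposition, the signature and the dimension differ in a controlled way: if $\calL_\Delta(A_j)$ has $\kappa_j^-$ negative squares then $\eig(A_j,\Delta) = \dim\calL_\Delta(A_j)$ equals $\sig(\calL_\Delta(A_j)) + 2\kappa_j^-$, and since $\calL_\Delta(A_j)$ is a subspace of a $\Pi_\kappa$-space we have $0 \le \kappa_j^- \le \kappa$. (One must note here that $\calL_\Delta(A_j)$ is genuinely finite-dimensional: since $\Delta \subset \R\setminus\sigma_{\mathrm{ess}}(A_1)$ and the resolvent difference is finite rank, $\Delta$ meets $\sigma_{\mathrm{ess}}(A_2)$ also in at most finitely many points — actually not at all — so the eigenvalues of both $A_j$ in $\Delta$ are isolated with finite-dimensional root subspaces, and there can only be finitely many of them accumulating only at the endpoints, which is where a compactness/resolvent argument on the closed subinterval is needed.) Granting \eqref{e:sig_diff}, one then estimates
\begin{align*}
\eig(A_1,\Delta) - \eig(A_2,\Delta)
&= \big(\sig(\calL_\Delta(A_1)) + 2\kappa_1^-\big) - \big(\sig(\calL_\Delta(A_2)) + 2\kappa_2^-\big)\\
&\le n + 2\kappa_1^- \le n + 2\kappa,
\end{align*}
and symmetrically for the other sign, giving \eqref{e:eig_diff}.

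So the real work is \eqref{e:sig_diff}. Here I would try a perturbation-theoretic argument in the spirit of the Hilbert space proof in \cite{bs}, but adapted to track signatures rather than dimensions. Writing $A_j - \la_0$ (for a fixed real $\la_0$ in $\Delta$, or outside, as convenient) one wants a quantity that behaves additively under finite-rank perturbations; the natural candidate is a spectral-shift-type function or, more directly, the difference of the (signed) counting functions associated to the spectral projections of $A_1$ and $A_2$ onto $\Delta$. In the Pontryagin space setting these spectral projections exist as bounded (not necessarily orthogonal) projections onto $\calL_\Delta(A_j)$ once the endpoints of $\Delta$ are not in the spectrum; the subtlety is that near a non-semisimple real eigenvalue, or near a pair of complex-conjugate eigenvalues that could in principle sit inside $\Delta$ — but complex eigenvalues cannot lie in the real interval $\Delta$, so that issue is confined to the boundary behavior. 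The plan is: (i) first treat the case where $\Delta = (a,b)$ is bounded with $a,b \in \rho(A_1)\cap\rho(A_2)$, build the Riesz-type projections $P_j := \frac{1}{2\pi i}\oint (A_j - z)^{-1}\,dz$ with $P_j\frakP = \calL_\Delta(A_j)$; (ii) show that $P_1 - P_2$ has rank at most $n$ (this is where the $n$-dimensionality of the resolvent difference enters, via the contour integral of the resolvent difference); (iii) compare the forms $\product$ restricted to $\ran P_1$ and $\ran P_2$ and show the signatures differ by at most the rank of $P_1 - P_2$. Step (iii) is a linear-algebra lemma: if $\calM_1, \calM_2$ are finite-dimensional subspaces of a space with a (possibly indefinite) inner product and $\dim(\calM_1 \ominus (\calM_1\cap\calM_2)) + \dim(\calM_2\ominus(\calM_1\cap\calM_2)) \le m$, or more precisely if the projections differ by rank $\le m$, then $|\sig(\calM_1) - \sig(\calM_2)| \le m$ — proved by noting $\calM_1\cap\calM_2$ contributes equally and the complementary pieces each shift the signature by at most their dimension.

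The main obstacle I expect is handling the boundary of $\Delta$ when the endpoints are not in the resolvent set — i.e., the general case where $a$ or $b$ (or $\pm\infty$) is an accumulation point of eigenvalues, or a non-isolated point of the essential spectrum of $A_2$. In the Hilbert space proof one takes slightly smaller closed subintervals $[a',b'] \subset \Delta$ with endpoints in the resolvent set, applies the estimate there, and lets $a' \downarrow a$, $b' \uparrow b$; the counting functions are monotone in Hilbert space so the limit is clean. In the Pontryagin space setting monotonicity of $\sig$ under enlarging the subinterval is \emph{not} automatic — adding an eigenvalue could decrease the signature. The fix is to observe that $\sig(\calL_{\Delta'}(A_j))$ is still eventually constant as $\Delta' \uparrow \Delta$: the total number of eigenvalues of $A_j$ in $\Delta$ is finite (since $\Delta \cap \sigma_{\mathrm{ess}}(A_j) = \emptyset$ for $j=1,2$, the latter because essential spectrum is invariant under finite-rank resolvent perturbations), so for $\Delta'$ close enough to $\Delta$ one has $\calL_{\Delta'}(A_j) = \calL_\Delta(A_j)$ exactly, and the bounded-interval case applies verbatim. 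One also needs to separately dispose of the possibility that $\la_0$-type normalization points fail to exist, but since $\rho(A_1)\cap\rho(A_2) \cap \Delta$ is nonempty and open (its complement in $\Delta$ being the finite eigenvalue set), suitable contour endpoints always exist. I would present the finite-rank projection comparison (steps ii and iii) as a standalone lemma, then assemble the proof of Theorem~\ref{t:main} from it plus the signature-dimension bookkeeping above.
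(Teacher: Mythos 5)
Your reduction of \eqref{e:eig_diff} to \eqref{e:sig_diff} via $\eig(A_j,\Delta)=\sig(\calL_\Delta(A_j))+2\kappa_j^-$ with $0\le\kappa_j^-\le\kappa$ is fine and is exactly the bookkeeping the paper uses. The genuine gap is in the core of your argument for \eqref{e:sig_diff}, namely step (ii): the difference of the Riesz projections $P_1-P_2=\frac{1}{2\pi i}\oint\bigl((A_1-z)^{-1}-(A_2-z)^{-1}\bigr)\,dz$ does \emph{not} have rank at most $n$. The integrand has rank at most $n$ for each fixed $z$, but its range varies with $z$, and an integral (a limit of sums) of rank-$\le n$ operators carries no rank bound. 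This already fails in a three-dimensional Hilbert space: take $A_1=\operatorname{diag}(1,2,3)$, $A_2=A_1+\veps vv^*$ with $v=(1,1,1)^T$, $\veps$ small, and $\Delta=(1/2,5/2)$, so that $n=1$. Then $\ran P_1=\linspan\{e_1,e_2\}$, while first-order perturbation theory shows $\ran P_2$ is a different two-dimensional subspace; two distinct planes in $\C^3$ meet in a line, so $P_1-P_2$ is a nonzero trace-free selfadjoint operator of rank $2$, and $\dim\bigl(\ran P_1/(\ran P_1\cap\ran P_2)\bigr)+\dim\bigl(\ran P_2/(\ran P_1\cap\ran P_2)\bigr)=2>n$. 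Hence the hypothesis of your linear-algebra lemma in step (iii) is also unavailable, and the projection-comparison route collapses; note it would not even reprove \eqref{e:hs} in the Hilbert space case, since there too only the \emph{difference of the ranks}, not the rank of the difference, is controlled by $n$.

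What actually survives from the resolvent hypothesis is only that $\calD=\{x\in\dom A_1\cap\dom A_2:\ A_1x=A_2x\}$ has codimension $n$ in $\dom A_2$. The paper exploits this not by comparing $\calL_\Delta(A_1)$ and $\calL_\Delta(A_2)$ as subspaces, but through the quadratic form $x\mapsto[(A_j-a)x,(A_j-b)x]$, which coincides for $j=1,2$ on $\calD$ and whose sign separates the spectral subspace of $\Delta'=(a,b)$ from its complement, up to the $\kappa$-dimensional exceptional subspaces supplied by Proposition \ref{p:negpos} (this is where the Pontryagin structure enters). A projection restricted to a suitable subspace of $\calD$ is then shown to be injective by a sign contradiction, which yields the dimension inequality directly. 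To repair your outline you would have to replace steps (ii)--(iii) by an argument of this definiteness-plus-codimension type; the Riesz projections by themselves do not encode how the two spectral subspaces sit relative to one another.
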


We prove the theorem in section \ref{s:proof}. Clearly, if the Pontryagin space is in fact a Hilbert space (i.e.\ $\kappa = 0$), then both \eqref{e:sig_diff} and \eqref{e:eig_diff} coincide with \eqref{e:hs}. Hence, Theorem \ref{t:main} is a generalization of the known Hilbert space result.

An isolated eigenvalue of a selfadjoint operator in a Krein space $(\calK,\product)$ is said to be of {\it positive type} if the corresponding eigenspace is a Hilbert space with respect to the inner product $\product$. The following corollary can be seen as a local version of the Hilbert space case.

\begin{cor}\label{c:Pont_+}
Let $\frakP$, $A_1$, $A_2$ and $\Delta$ be as in Theorem {\rm\ref{t:main}}. If the number of negative squares of $\product$ on the spaces $\calL_\Delta(A_1)$ and $\calL_\Delta(A_2)$ coincide, then we have
$$
|\eig(A_1,\Delta) - \eig(A_2,\Delta)| \,\le\, n.
$$
This holds in particular if the eigenvalues of $A_1$ and $A_2$ in $\Delta$ are of positive type.
\end{cor}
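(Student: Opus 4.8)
The plan is to reduce the statement to Theorem~\ref{t:main} by expressing each signature through an eigenvalue count; the hypothesis on the negative indices is exactly what makes the resulting correction terms cancel. Write $\calL_j:=\calL_\Delta(A_j)$ and $\kappa_j^\pm:=\kappa_\pm(\calL_j)$. Since the resolvent difference of $A_1$ and $A_2$ is finite-dimensional, the two operators have the same essential spectrum, so $\Delta\subset\R\setminus(\sigma_{\rm ess}(A_1)\cup\sigma_{\rm ess}(A_2))$; hence every eigenvalue of $A_j$ in $\Delta$ is an isolated point of $\sigma(A_j)$ of finite algebraic multiplicity, $\calL_j$ is the $[\cdot,\cdot]$-orthogonal direct sum of the finitely many corresponding root subspaces (we may assume $\dim\calL_j<\infty$, the other case being handled as in Theorem~\ref{t:main}), and $\eig(A_j,\Delta)=\dim\calL_j$.

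The one Pontryagin-specific ingredient is that $\calL_j$ is \emph{nondegenerate}. For an isolated eigenvalue $\la_0\in\Delta$ of $A_j$ the corresponding root subspace equals $\ran P_{\la_0}$, where $P_{\la_0}=\frac{1}{2\pi i}\oint_\gamma(\la-A_j)^{-1}\,d\la$ is the Riesz projection along a small positively oriented circle $\gamma$ separating $\la_0$ from $\sigma(A_j)\setminus\{\la_0\}$; since $A_j$ is selfadjoint and $\la_0$ is real, $P_{\la_0}^{[*]}=P_{\la_0}$, so $P_{\la_0}$ is a selfadjoint projection and its range is nondegenerate. A finite $[\cdot,\cdot]$-orthogonal sum of nondegenerate subspaces is nondegenerate, so $\calL_j$ is nondegenerate and $\dim\calL_j=\kappa_j^++\kappa_j^-$. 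This is the step I expect to require the most care (keeping the Pontryagin-space bookkeeping straight), although it reduces to the selfadjointness of the Riesz projection at a real isolated eigenvalue plus elementary linear algebra.

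It follows that
\[
\sig(\calL_j)=\kappa_j^+-\kappa_j^-=\bigl(\dim\calL_j-\kappa_j^-\bigr)-\kappa_j^-=\eig(A_j,\Delta)-2\kappa_j^-\qquad(j=1,2).
\]
Subtracting and using the hypothesis $\kappa_1^-=\kappa_2^-$ gives $\sig(\calL_\Delta(A_2))-\sig(\calL_\Delta(A_1))=\eig(A_2,\Delta)-\eig(A_1,\Delta)$, so the first assertion is immediate from \eqref{e:sig_diff}.

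For the last sentence it suffices to show $\kappa_1^-=\kappa_2^-=0$. If every eigenvalue $\la_0$ of $A_j$ in $\Delta$ is of positive type, then $A_j$ has no Jordan chain at $\la_0$: if $(A_j-\la_0)y=x\neq0$ then, as $A_j$ is selfadjoint and $\la_0$ real, $[x,x]=[(A_j-\la_0)y,x]=[y,(A_j-\la_0)x]=0$, contradicting positive definiteness of $\ker(A_j-\la_0)$. Hence the root subspace at $\la_0$ is $\ker(A_j-\la_0)$, which is positive definite, and $\calL_j$ is a finite orthogonal sum of positive definite subspaces, so $\kappa_j^-=0$ and the hypothesis of the corollary holds.
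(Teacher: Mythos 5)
Your proposal is correct and follows exactly the route the paper intends: the corollary is an immediate consequence of the estimate \eqref{e:sig_diff} combined with the identity $\sig(\calL_\Delta(A_j))=\eig(A_j,\Delta)-2\kappa_-(\calL_\Delta(A_j))$, which rests on the nondegeneracy of $\calL_\Delta(A_j)$ that you justify via the selfadjoint Riesz projections. Your additional argument that positive-type eigenvalues admit no Jordan chains, so that $\kappa_-(\calL_\Delta(A_j))=0$ for both operators, correctly handles the final assertion.
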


If $(\frakP,\product)$ is a Pontryagin space with $\kappa$ {\it positive} squares, then the application of Theorem \ref{t:main} to the Pontryagin space $(\frakP,-\product)$ also yields \eqref{e:sig_diff} and \eqref{e:eig_diff}. In the finite-dimensional case this leads to the following corollary.

\begin{cor}
Let $\frakP$, $A_1$, $A_2$ and $\Delta$ be as in Theorem {\rm\ref{t:main}}. If $\dim\frakP < \infty$, then
$$
\left|\eig(A_1,\Delta) - \eig(A_2,\Delta)\right| \,\le\, n + 2\min\{\kappa_+(\frakP),\kappa_-(\frakP)\}.
$$
\end{cor}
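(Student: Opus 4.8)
The plan is to reduce the statement to Theorem~\ref{t:main} by choosing, for the given finite-dimensional Pontryagin space $(\frakP,\product)$, the more favourable of the two inner products $\product$ and $-\product$. Suppose first that $\kappa_-(\frakP)\le\kappa_+(\frakP)$, so that $\min\{\kappa_+(\frakP),\kappa_-(\frakP)\}=\kappa_-(\frakP)=\kappa$. Then Theorem~\ref{t:main}, applied directly to $(\frakP,\product)$, gives $|\eig(A_1,\Delta)-\eig(A_2,\Delta)|\le n+2\kappa=n+2\min\{\kappa_+(\frakP),\kappa_-(\frakP)\}$, which is the claim. In the opposite case $\kappa_+(\frakP)<\kappa_-(\frakP)$, I observe that $(\frakP,-\product)$ is again a Pontryagin space, now with exactly $\kappa_+(\frakP)$ negative squares, and that $A_1$, $A_2$ remain selfadjoint with respect to $-\product$ (selfadjointness is unchanged by a global sign on the inner product). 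The resolvent difference has the same rank $n$, the essential spectrum $\sigma_{\rm ess}(A_1)$ is unaffected, and the eigenvalues together with their algebraic multiplicities—hence $\eig(A_j,\Delta)$—depend only on the operators, not on the inner product. Applying Theorem~\ref{t:main} to $(\frakP,-\product)$ therefore yields $|\eig(A_1,\Delta)-\eig(A_2,\Delta)|\le n+2\kappa_+(\frakP)=n+2\min\{\kappa_+(\frakP),\kappa_-(\frakP)\}$.

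In both cases the bound is exactly $n+2\min\{\kappa_+(\frakP),\kappa_-(\frakP)\}$, which completes the proof. The only point that needs a word of care is the permanence of selfadjointness and of the essential spectrum under passing from $\product$ to $-\product$: if $\product=(J\,\cdot\,,\cdot)$ for a fundamental symmetry $J$, then $-\product=(-J\,\cdot\,,\cdot)$, and the adjoint of any bounded or closed densely defined operator computed with respect to $-J$ equals the one computed with respect to $J$, since $(-J)^{-1}=-J^{-1}$ and the two sign factors cancel in the defining identity $[Ax,y]=[x,A^+y]$. Likewise $\sigma_{\rm ess}$ and the eigenvalue data are spectral-theoretic notions attached to the operator alone.

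I do not expect any genuine obstacle here: the statement is a direct specialization of Theorem~\ref{t:main} exploiting the symmetry $\product\leftrightarrow-\product$ noted in the paragraph preceding the corollary, and the finite-dimensionality is used only to guarantee that at least one of the two indefinite inner products has finitely many negative squares, so that Theorem~\ref{t:main} is applicable in the chosen normalization. The mild bookkeeping about which of $\kappa_\pm(\frakP)$ plays the role of ``$\kappa$'' is the entire content of the argument.
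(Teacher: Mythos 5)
Your argument is exactly the paper's: the paragraph preceding the corollary observes that Theorem \ref{t:main} applied to $(\frakP,-\product)$ gives the bound with $\kappa_+(\frakP)$ in place of $\kappa$, and the corollary is obtained by taking whichever of the two normalizations is more favourable, just as you do. The invariance checks you record (selfadjointness, rank of the resolvent difference, and eigenvalue multiplicities under the sign flip $\product\mapsto-\product$) are correct and are the only content of the reduction, so the proposal is complete.
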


We conclude this section with an example which shows that equality in \eqref{e:eig_diff} is possible in the case $\kappa = 1$.

\begin{ex}
In the space $\C^3$ define the matrices
$$
J := \bmat -1 & 0 & 0\\0 & 1 & 0\\0 & 0 & 1\emat,\,
A_1 := \bmat 0 & 100i & 0\\100i & 0 & 0\\0 & 0 & 0\emat,\,
A_2 := \bmat 0 & 100i & 0\\100i & 400 & 20\\0 & 20 & 1\emat
$$
and the inner product $\product := (J\cdot,\cdot)$. We have
$$
A_1(1,0,0)^T = A_2(1,0,0)^T\quad\text{ and }\quad A_1(0,1,-20)^T = A_2(0,1,-20)^T.
$$
Hence, $n = 1$. Moreover, $\sigma(A_1) = \{100i,-100i,0\}$ and $A_2$ has three distinct eigenvalues in $(0,\infty)$.
\end{ex}

\section{Proof of the main result}\label{s:proof}
%
Recall that an open interval $(a,b)$ belongs to the resolvent set of a selfadjoint operator $T$ in a Hilbert space $(\calH,\hproduct)$ if and only if
$$
((T - a)x,(T - b)x)\,\ge\,0
$$
holds for all $x\in\dom T$. The same inequality with the opposite relation, i.e.
$$
((T - a)x,(T - b)x)\,\le\,0,
$$
holds for all $x\in\dom T$ if and only if $\sigma(T)\subset[a,b]$. These relations will be used below.

\begin{prop}\label{p:negpos}
Let $(\frakP,\product)$ be a Pontryagin space with $\kappa$ negative squares, let $A$ be a selfadjoint operator in $(\frakP,\product)$ and let $a,b\in\R$, $a < b$. Then the following holds:
\begin{itemize}
\item[{\rm (}a{\rm )}] If $[a,b]\subset\rho(A)$, then $\frakP$ admits a decomposition $\frakP = \calM_-\ds\calM_+$, where $\calM_-\subset\dom A$ and $\dim\calM_- = \kappa$ such that
$$
[(A - a)x,(A - b)x] < 0
\quad\text{for }\,x\in\calM_-\setminus\{0\}
$$
and
$$
[(A - a)x,(A - b)x] > 0
\quad\text{for }\,x\in(\calM_+\cap\dom A)\setminus\{0\}
$$
\item[{\rm (}b{\rm )}] If $\sigma(A)\subset (a,b)$, then $\frakP$ admits a decomposition $\frakP = \calM_-\ds\calM_+$, where $\dim\calM_- = \kappa$ such that
$$
[(A - a)x,(A - b)x] > 0
\quad\text{for }\,x\in\calM_-\setminus\{0\}
$$
and
$$
[(A - a)x,(A - b)x] < 0
\quad\text{for }\,x\in\calM_+\setminus\{0\}.
$$
\end{itemize}
\end{prop}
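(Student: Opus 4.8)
The plan is to reduce, via the resolvent at the midpoint $\mu:=(a+b)/2$ and the holomorphic functional calculus, the quadratic form $q(x):=[(A-a)x,(A-b)x]$ on $\dom A$ to the inner product $\product$ itself, transported by a linear bijection of $\dom A$ onto $\frakP$, and then to split $\frakP$ by a fundamental decomposition. For (a): $\mu\in[a,b]\subset\rho(A)$, so with $c:=(b-a)/2$ and $B:=(A-\mu)^{-1}$ (bounded and selfadjoint in $(\frakP,\product)$, $\ran B=\dom A$) one computes for $x=By$ that $(A-a)x=(I+cB)y$, $(A-b)x=(I-cB)y$, whence, since $I-cB$ is selfadjoint,
$$
q(x)=[(I-cB)(I+cB)y,y]=[Sy,y],\qquad S:=I-c^2B^2 .
$$
By the spectral mapping theorem $\sigma(S)=\{1-c^2s^2:s\in\sigma(B)\}$, and $\sigma(S)\cap(-\infty,0]=\emptyset$: if $1-c^2s^2\le0$ for some $s\in\sigma(B)\setminus\{0\}$ then $s^{-1}$ is real with $0<|s^{-1}|\le c$, so $\mu+s^{-1}\in[a,b]\cap\sigma(A)$, a contradiction, while $0\in\sigma(B)$ only contributes $1$ to $\sigma(S)$. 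Hence the principal square root is holomorphic near $\sigma(S)$, and $T:=S^{1/2}$ is bounded, selfadjoint, boundedly invertible, commutes with $B$, and $T^2=S$. Selfadjointness of $T$ gives $q(x)=[T^2y,y]=[Ty,Ty]=[\Phi x,\Phi x]$, where $\Phi:=T(A-\mu)\colon\dom A\to\frakP$ is a bijection with $\Phi^{-1}=BT^{-1}=T^{-1}B$ bounded on $\frakP$ and $\ran\Phi^{-1}=\dom A$.

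Now fix a fundamental decomposition $\frakP=\frakP^{+}[\oplus]\frakP^{-}$ with $\dim\frakP^{-}=\kappa$ and fundamental symmetry $J$; since $\dom A$ is dense and uniform negativity is stable under small perturbations, $\frakP^{-}$ may be taken inside $\dom A$. Put $\calM_-:=\Phi^{-1}(\frakP^{-})\subset\dom A$ and $\calM_+:=\overline{\Phi^{-1}(\frakP^{+})}$. Then $\dim\calM_-=\kappa$, and since $\Phi$ maps $\Phi^{-1}(\frakP^{\pm})$ onto $\frakP^{\pm}$, the identity $q(x)=[\Phi x,\Phi x]$ shows $q<0$ on $\calM_-\setminus\{0\}$ and $q>0$ on $\Phi^{-1}(\frakP^{+})\setminus\{0\}$. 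It remains to check $\frakP=\calM_-\ds\calM_+$ and $\calM_+\cap\dom A=\Phi^{-1}(\frakP^{+})$. The sum is all of $\frakP$ because $\calM_-+\calM_+\supseteq\Phi^{-1}(\frakP^{-})+\Phi^{-1}(\frakP^{+})=\dom A$ is dense and $\calM_-$ is finite‑dimensional; and computing the orthogonal complement of $\calM_+$ in the Hilbert space $(\frakP,[J\cdot\,,\cdot])$ via $B^{*}=JBJ$, $T^{*}=JTJ$ and $\frakP^{-}\subset\dom A=\ran\Phi^{-1}$ gives $(\calM_+)^{\perp}=J\,\Phi(\frakP^{-})$, a $\kappa$‑dimensional space, so $\codim\calM_+=\kappa$ and hence $\calM_-\cap\calM_+=\{0\}$. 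Finally, if $x\in\calM_+\cap\dom A$, write $x\in\dom A=\calM_-\ds\Phi^{-1}(\frakP^{+})$; the $\calM_-$‑component then lies in $\calM_-\cap\calM_+=\{0\}$, so $x\in\Phi^{-1}(\frakP^{+})$. This proves (a).

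For (b) the operator $A$ is bounded (its spectrum is bounded) and $\sigma(A)\subset(a,b)\subset\R$, so there is no domain issue. Here $R:=(A-a)(b-A)$ is selfadjoint with $\sigma(R)=\{(t-a)(b-t):t\in\sigma(A)\}\subset(0,\infty)$, so $V:=R^{1/2}$ is bounded, selfadjoint and boundedly invertible, and $-q(x)=[(A-a)x,(b-A)x]=[Rx,x]=[Vx,Vx]$. With a fundamental decomposition $\frakP=\frakP^{+}[\oplus]\frakP^{-}$, $\dim\frakP^{-}=\kappa$, the closed subspaces $\calM_-:=V^{-1}(\frakP^{-})$ and $\calM_+:=V^{-1}(\frakP^{+})$ satisfy $\frakP=\calM_-\ds\calM_+$, and $q(x)=-[Vx,Vx]$ is positive on $\calM_-\setminus\{0\}$ and negative on $\calM_+\setminus\{0\}$.

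The main obstacle is part (a) with $A$ unbounded: $\Phi^{-1}(\frakP^{+})$ is then a genuinely non‑closed subspace of $\dom A$, so it cannot itself serve as $\calM_+$; one must show that its closure is still a direct complement of $\calM_-$ with $\calM_+\cap\dom A=\Phi^{-1}(\frakP^{+})$, which is precisely why $\frakP^{-}$ has to be chosen inside $\dom A$ and why the adjoint identities $B^{*}=JBJ$, $T^{*}=JTJ$ are needed in the codimension count. A secondary, but essential, point is the elementary verification that $\sigma(S)$ avoids $(-\infty,0]$, which is exactly where the hypothesis $[a,b]\subset\rho(A)$ is consumed and which guarantees the selfadjoint square root $T$ exists.
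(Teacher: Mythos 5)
Your proof is correct, but it takes a genuinely different route from the paper. The paper first reduces to bounded $A$ by a spectral cutoff, then invokes Pontryagin's invariant subspace theorem to obtain a $\kappa$-dimensional non-positive $A$-invariant subspace $\calL$, splits $\frakP$ along $\calL=\calL_-[\ds]\calL^\iso$ into a block-triangular picture, and verifies directly that the new inner product $\langle x,y\rangle:=[(A-a)x,(A-b)y]$ makes $\frakP$ a Pontryagin space with $\kappa$ negative squares; the desired $\calM_\pm$ is then a fundamental decomposition of $(\frakP,\langle\cdot,\cdot\rangle)$. You instead establish the same inertia count by a congruence argument: writing $q(x)=[\Phi x,\Phi x]$ with $\Phi=T(A-\mu)$ and $T$ a Riesz--Dunford square root of $I-c^2(A-\mu)^{-2}$ (whose existence is exactly where $[a,b]\subset\rho(A)$ is used), and pulling back a fundamental decomposition of $(\frakP,\product)$ through $\Phi$. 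What your route buys is independence from Pontryagin's theorem --- only the holomorphic functional calculus and a Sylvester-type transport of signature are needed --- and it handles unbounded $A$ directly through the resolvent rather than via a cutoff; the price is the domain bookkeeping you correctly identify as the crux (choosing $\frakP^-\subset\dom A$ so that $T\frakP^-\subset\ran(A-\mu)^{-1}$, and the codimension-$\kappa$ computation for $\overline{\Phi^{-1}(\frakP^+)}$ needed to conclude $\calM_+\cap\dom A=\Phi^{-1}(\frakP^+)$). The paper's route is shorter once Pontryagin's theorem is granted and keeps the two cases (a), (b) visibly parallel. Both arguments consume the spectral hypothesis in the same elementary way, namely to keep the spectrum of the relevant auxiliary operator off $(-\infty,0]$.
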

\begin{proof}
First of all we show that it is no restriction to assume that the operator $A$ is bounded. In case (b) this immediately follows from the condition $\sigma(A)\subset (a,b)$. In case (a) we choose a ball $B_r(0)$, $r > 0$, with the zero point in the center such that the non-real spectrum of $A$ is contained in $B_r(0)$ and $E(\R\setminus [-r,r])\frakP$ is a Hilbert space with respect to the inner product $\product$ (where $E$ denotes the spectral function of the operator $A$). The restriction of $A$ to this Hilbert space is selfadjoint. Hence $[(A - a)x,(A-b)x] > 0$ holds for all $x\in E(\R\setminus [-r,r])\frakP$, $x\neq 0$. Thus, if (a) holds for the bounded operator $A|E([-r,r])\frakP$, then it obviously also holds for $A$.

By a Theorem of L.S.\ Pontryagin (see also \cite[Theorem 12.1$^\prime$]{ikl}) there exists a $\kappa$-dimensional non-positive subspace $\calL\subset\frakP$ which is $A$-invariant. Choose a (negative) subspace $\calL_-\subset\calL$ such that $\calL = \calL_-\,[\ds]\,\calL^\iso$, where $\calL^\iso$ denotes the isotropic part of $\calL$. Evidently, $\calL^\iso$ is $A$-invariant. By \cite[Theorem IX.2.5]{b} (see also \cite[Theorem 3.4]{ikl}) there exist a subspace $\frakP_0\subset\dom A$ with $\dim\frakP_0 = \dim\calL^\iso$ and a (uniformly) positive subspace $\calM$ such that
$$
\frakP = \calL_-\,[\ds]\,(\calL^\iso\,\ds\,\frakP_0)\,[\ds]\calM.
$$
Since $\calL^\iso$, $\calL$ and $\calL^\gperp = \calL^\iso\ds\calM$ are $A$-invariant, with respect to the decomposition
$$
\frakP = \calL^\iso\,\ds\,\calL_-\,\ds\,\calM\,\ds\,\frakP_0
$$
the operator $A$ has the following operator matrix representation:
$$
A = \bmat
A_{11} & A_{12} & A_{13} & A_{14}\\
   0   & A_{22} &    0   & A_{24}\\
   0   &    0   & A_{33} & A_{34}\\
   0   &    0   &    0   & A_{44}
\emat.
$$
In both cases (a) and (b) we have $a,b\in\rho(A)$. Therefore, the inner product
$$
\langle x,y\rangle := [(A - a)x,(A - b)y],\quad x,y\in\frakP
$$
defines a Krein space inner product on $\frakP$.

We only consider the case (a) here. The proof of (b) follows analogous lines. For $\mathfrak m\in\calM$ we have
$$
\langle\mathfrak m,\mathfrak m\rangle = [A_{13}\mathfrak m + (A_{33} - a)\mathfrak m,A_{13}\mathfrak m + (A_{33} - b)\mathfrak m] = [(A_{33} - a)\mathfrak m,(A_{33} - b)\mathfrak m].
$$
From $(a-\veps,b+\veps)\subset\rho(A)\subset\rho(A_{33})$ for some $\veps > 0$ and the selfadjointness of $A_{33}$ in the Hilbert space $(\calM,\product)$ we conclude that
$$
[(A_{33} - (a - \veps))\mathfrak m,(A_{33} - (b + \veps))\mathfrak m]\,\ge\,0,
$$
and hence
$$
\langle\mathfrak m,\mathfrak m\rangle\,\ge\,\veps(b-a+\veps)[\mathfrak m,\mathfrak m],
$$
which shows that $\calM$ is uniformly $\langle\cdot,\cdot\rangle$-positive. Similarly, it is shown that $\calL_-$ is $\langle\cdot,\cdot\rangle$-negative. Moreover, $\calL^\iso$, $\calL_-$ and $\calM$ are mutually $\langle\cdot,\cdot\rangle$-orthogonal and $\calL^\iso$ is $\langle\cdot,\cdot\rangle$-neutral. Hence, $(\frakP,\langle\cdot,\cdot\rangle)$ is a Pontryagin space with $\kappa$ negative squares which proves the assertion.
\end{proof}

We are now ready to prove Theorem \ref{t:main}.

\begin{proof}[Proof of Theorem \ref{t:main}]
It is no restriction to assume that the number of eigenvalues of $A_1$ in $\Delta$ (counting multiplicities) is finite. By $E_j$ we denote the spectral function of the operator $A_j$, $j=1,2$. Let $\Delta' = (a,b)$ be a subinterval of $\Delta$ which contains all the eigenvalues of $A_1$ in $\Delta$ such that $[a,b]\subset\Delta$ and $a,b\in\rho(A_1)\cap\rho(A_2)$. According to Proposition \ref{p:negpos} for $j=1,2$ we have decompositions
$$
(I - E_j(\Delta'))\frakP = \calM_{+,\rm out}^j\,\ds\,\calM_{-,\rm out}^j
\quad\text{and}\quad
E_j(\Delta')\frakP = \calM_{+,\rm in}^j\,\ds\,\calM_{-,\rm in}^j,
$$
where $\calM_{-,\rm out}^j\subset\dom A_j$,
$$
\dim\calM_{-,\rm out}^j = \kappa_-((I - E_j(\Delta'))\frakP)
\quad\text{and}\quad
\dim\calM_{-,\rm in}^j = \kappa_-(E_j(\Delta')\frakP)
$$
such that
$$
[(A_j - a)x,(A_j - b)x]\,<\,0\quad\text{for }\,x\in(\calM_{-,\rm out}^j[\ds]\calM_{+,\rm in}^j)\setminus\{0\},
$$
and
$$
[(A_j - a)x,(A_j - b)x]\,>\,0\quad\text{for }\,x\in\left((\calM_{+,\rm out}^j\cap\dom A_j)[\ds]\calM_{-,\rm in}^j\right)\setminus\{0\}.
$$
Evidently,
$$
\frakP = \left(\calM_{+,\rm out}^1\,\ds\,\calM_{-,\rm out}^1\right)\;[\ds]\,\left(\calM_{+,\rm in}^1\,\ds\,\calM_{-,\rm in}^1\right).
$$
Let $Q_1$ be the projection onto $\calM_{-,\rm out}^1\,[\ds]\,\calM_{+,\rm in}^1$ with respect to this decomposition of $\frakP$. Moreover, set
$$
\calK := \left(\calM_{-,\rm out}^2\,[\ds]\,\calM_{+,\rm in}^2\right)\cap\calD,
$$
where
$$
\calD := \{x\in\dom A_1\cap\dom A_2 : A_1x = A_2x\}.
$$
Note that $\calM_{-,\rm out}^2\,[\ds]\,\calM_{+,\rm in}^2\subset\dom A_2$. Assume that there exists $x\in\calK$ with $Q_1x = 0$ and $x\neq 0$. From $x\in\calK$ we deduce
$$
[(A_1 - a)x,(A_1 - b)x] = [(A_2 - a)x,(A_2 - b)x]\,<\,0.
$$
But $Q_1x = 0$ implies $x\in\calM_{+,\rm out}^1\,[\ds]\,\calM_{-,\rm in}^1$ and hence
$$
[(A_1 - a)x,(A_1 - b)x]\,>\,0.
$$
A contradiction. Therefore, the restriction of the linear mapping $Q_1$ to $\calK$ is one-to-one which yields $\dim\calK\,\le\,\dim Q_1\frakP$, i.e.
$$
\dim\calK\,\le\,\dim\calM_{-,\rm out}^1 + \dim\calM_{+,\rm in}^1 = \kappa_-((I - E_1(\Delta'))\frakP) + \kappa_+(E_1(\Delta')\frakP).
$$
On the other hand, as $\dim(\dom A_2/\calD) = n$ it follows that
$$
\dim\calK\,\ge\,\dim\calM_{-,\rm out}^2 + \dim\calM_{+,\rm in}^2 - n = \kappa_-((I - E_2(\Delta'))\frakP) + \kappa_+(E_2(\Delta')\frakP) - n,
$$
and we obtain
\begin{align*}
\kappa_+(E_2(\Delta')\frakP) - \kappa_+(E_1(\Delta')\frakP)
&\le n + \kappa_-((I - E_1(\Delta'))\frakP) - \kappa_-((I - E_2(\Delta'))\frakP)\\
&= n + (\kappa - \kappa_-(E_1(\Delta')\frakP)) - (\kappa - \kappa_-(E_2(\Delta')\frakP))\\
&= n + \kappa_-(E_2(\Delta')\frakP) - \kappa_-(E_1(\Delta')\frakP).
\end{align*}
This implies $\sig(\calL_{\Delta'}(A_2)) - \sig(\calL_{\Delta'}(A_1))\,\le\,n$ and hence also
$$
\eig(A_2,\Delta')\,\le\, n + 2\kappa + \eig(A_1,\Delta).
$$
Now, it is clear that $\eig(A_2,\Delta)$ is finite, and the relations \eqref{e:sig_diff} and \eqref{e:eig_diff} follow.
\end{proof}


\begin{thebibliography}{99}
\bibitem{ai}   T.Ya.\ Azizov, I.S.\ Iokhvidov,
               Linear operators in spaces with an indefinite metric,
               John Wiley Sons, Ltd., Chichester, 1989.

\bibitem{bs}   M.Sh.\ Birman, M.Z.\ Solomjak,
               Spectral theory of selfadjoint operators in Hilbert space, Mathematics and its applications,
               D.\ Reidel Publishing Company, 1987.

\bibitem{b}    J.\ Bognar,
               Indefinite inner product spaces,
               Springer, 1974.
%

\bibitem{ikl}  I.S.\ Iohvidov, M.G.\ Krein, H.\ Langer,
               Introduction to the spectral theory of operators in spaces with an indefinite metric,
               Akademie-Verlag, 1982.
\end{thebibliography}
\end{document}